\newtheorem{theorem}{Theorem}
\newtheorem*{lemma}{Lemma}
\newtheorem*{corollary}{Corollary}
\newtheorem*{euler}{Euler's Identity}
\theoremstyle{definition}
\newtheorem*{definition}{Definition}
\newtheorem*{example}{Example}
\theoremstyle{remark}
\newtheorem*{remark}{Remark}
\newtheorem*{convention}{Convention}
\begin{document}

\title{The remainders of the Euclidean algorithm with symmetric quotients}
\author{Barry Smith}
\email{barsmith@lvc.edu}
\address{Department of Mathematical Sciences \\ Lebanon Valley College\\
Annville, PA, USA}
\subjclass[2000]{Primary 11E25; Secondary 11A05}
\keywords{Euclidean algorithm, continued fraction, continuant}

\begin{abstract}
When the Euclidean algorithm produces a symmetric sequence of quotients, we give explicit formulas for the remainders that allow the analysis of two families of quadratic forms in the remainders.
\end{abstract}

\maketitle

 If $n$ is a prime number congruent to $1$ modulo $4$ and $a$ is a square root of $-1$ modulo $n$, then the simple continued fraction expansion of $\tfrac{n}{a}$ is symmetric.  In the mid-nineteenth century, Hermite, Serret, and Smith \cite{cH1848, jS1848, hS1855} used the convergents of this continued fraction to produce the unique integral representation $n = x^2 + y^2$.  More than a century later, Brillhart \cite{jB1972} noted that $x$ and $y$ also appear as the first two remainders smaller than $\sqrt{n}$ when the Euclidean algorithm is performed with $n$ and $a$, thereby providing a simple and efficient algorithm for computing the representation $n = x^2 + y^2$.  The algorithm also works when $n$ is composite \cite{HMW1990}, producing from the distinct pairs $\pm a$ of square roots of $-1$ modulo $n$ the distinct \emph{primitive} representations of $n$ as a sum of two squares.

The author and two students recently found a strikingly similar algorithm for computing multiplicative inverses in modular arithmetic \cite{DLS2014}.  If $n$ and $a$ are relatively prime positive integers with $n > 1$,  perform the Euclidean algorithm with $n^2$ and $an+1$.  The first remainder less than $n$ is a multiplicative inverse for $a$ modulo $n$.  Further, the sequence of quotients of the Euclidean algorithm with $n^2$ and $an+1$ is nearly symmetric, spoiled only by the middle two quotients differing by $2$. 

This algorithm was verified by finding explicit formulas for the remainders of the Euclidean algorithm with $n^2$ and $an+1$ in terms of the quotients and remainders of the Euclidean algorithm with $n$ and $a$. Theorem \ref{T:explicitremainders} of the present article gives analogous formulas for the remainders in Brillhart's algorithm.  In Theorem \ref{T:sumofsquaredremainders}, these formulas are used to give a family of quadratic forms in the remainders that all are multiples of $n$. Wagon \cite{sW1990} observed a special case: pairing off the remainders, the sum of the squares of each pair is a multiple of $n$. Theorem \ref{T:sumofsquaredremainders} reveals exactly which multiple.


\begin{definition}
Let $(u_1, \ldots, u_t)$ be a sequence of positive integers.  We define the symbol $\mathfrak{c}_{i,j}$ for $1 \leq i \leq j \leq t$ to be the tridiagonal determinant
\begin{equation*}
	\mathfrak{c}_{i,j} := 
	\begin{vmatrix} 
		u_i & 1 & 0 & \cdots & 0 & 0\\
		-1 & u_{i+1} & 1 & \cdots & 0 & 0\\
		0 & -1 & u_{i+2} & \cdots & 0 & 0\\
		\vdots & \vdots & \vdots & \ddots & \vdots & \vdots\\
		0 & 0 & 0 & \cdots & u_{j-1} & 1\\
		0 & 0 & 0 & \cdots & -1 & u_j
	\end{vmatrix}
\end{equation*}
We also define $\mathfrak{c}_{j+1,j} = 1$ for $0 \leq j \leq t$ and $\mathfrak{c}_{j+2,j} = 0$   for $-1 \leq j \leq t$.  The numbers $\mathfrak{c}_{i,j}$ are \emph{continuants} of the sequence $(u_1, \ldots, u_t)$.
\end{definition}

The most important property of continuants is Euler's identity (see \cite{gC1919} or \cite{GKP1989}):
\begin{euler}
If $(u_1, \ldots, u_t)$ is a sequence of positive integers, then for\\ $1 \leq i \leq  l \leq m+2$ and $m \leq s \leq t$,
\begin{align*}
	\mathfrak{c}_{i,s} \mathfrak{c}_{\, l,m} - \mathfrak{c}_{i,m} \mathfrak{c}_{\, l,s} = (-1)^{m-l+1} \mathfrak{c}_{i,l-2} \, \mathfrak{c}_{m+2,s} 
\end{align*}
\end{euler}

\noindent (Note: The requirements usually given are $1 \leq  i < l < m+2$ and $m<s \leq t$.  But with our convention that $\mathfrak{c}_{j+2,j}=0$, the cases where $i=l$, $l=m+2$, or $m=s$ are trivially true.)

Useful special cases include:
\begin{subequations}
\begin{align}
	\mathfrak{c}_{1,l-2} &= (-1)^{l+s} (\mathfrak{c}_{1,s} \mathfrak{c}_{l,s-1} - \mathfrak{c}_{1, s-1} \mathfrak{c}_{l, s})  &&\text{for $i=1$ and $m=s-1$} \label{E:zerothEulereq}\\
	\mathfrak{c}_{i,s} &= \mathfrak{c}_{i,m} \mathfrak{c}_{m+1,s} + \mathfrak{c}_{i,m-1} \mathfrak{c}_{m+2,s}  &&\text{for $l=m+1$}\label{E:firstEulereq}\\
	\mathfrak{c}_{i,s} &= u_i \mathfrak{c}_{i+1,s} + \mathfrak{c}_{i+2,s}  &&\text{setting $m=i$ in  \eqref{E:firstEulereq}}\label{E:secondEulereq}\\
	\mathfrak{c}_{i,s} &=  u_s \mathfrak{c}_{i,s-1} + \mathfrak{c}_{i,s-2}    &&\text{setting $m=s-1$ in  \eqref{E:firstEulereq}}\label{E:thirdEulereq}
\end{align}
\end{subequations}

We shall only have cause to consider the Euclidean algorithm when the sequence of quotients is symmetric.  In this case, if the initial pair of integers is $n$, $a$, we set $n=r_1$, $a=r_2$, and write the Euclidean algorithm as 
\begin{align}\label{E:EucAlg}
r_1 &= \hspace{0.2cm} q_1 r_2 \hspace{-3.5cm} &&+ r_3 \notag \\ \notag
r_2 &= \hspace{0.2cm} q_2r_3 \hspace{-3.5cm} &&+ r_4\\ \notag
&\mathrel{\makebox[\widthof{=}]{\vdots}} \hspace{-3.5cm} &&\mathrel{\makebox[\widthof{+}]{\vdots}} \\ \notag
r_{s} &= \hspace{0.1cm} q_{s}r_{s+1} \hspace{-3.5cm} &&+ r_{s+2}\\ \notag
r_{s+1} &= \hspace{0.1cm} q_s r_{s+2} \hspace{-3.5cm} &&+ r_{s+3}\\ \notag
&\mathrel{\makebox[\widthof{=}]{\vdots}} \hspace{-3.5cm} &&\mathrel{\makebox[\widthof{+}]{\vdots}} \\ \notag
r_{2s-1} &= \hspace{0.2cm} q_2 r_{2s} \hspace{-3.5cm} &&+ r_{2s+1}\\ \notag
r_{2s} &=  \hspace{0.1cm} q_1 r_{2s+1} \hspace{-3.5cm} &&+ r_{2s+2},
\end{align}
where $r_{2s+1} = 1$ and $r_{2s+2} = 0$.  

\begin{convention}
Because the remainders must decrease, the final quotient $q_1$ cannot be $1$.  The following convention will allow us to consider $q_1=1$ as well:  if the first quotient of the Euclidean algorithm with $n$ and $a$ is $1$, rewrite the final equation $r_{l} = q_{l} \cdot 1 + 0$ as the two equations
\begin{align*}
	r_l &= (q_l - 1) \cdot 1 + 1\\
	1 &= 1 \cdot 1 + 0
\end{align*}
This allows the possibility for the Euclidean algorithm to produce a symmetric sequence of quotients with $q_1 = 1$.  In the results to follow, any discussion of the Euclidean algorithm will allow this possibility.  All proofs will be valid in this more general setting because the knowledge that the remainders decrease is never used.
\end{convention}

\begin{lemma}\label{L:endofEA}
Let $n$ and $a$ be relatively prime positive integers and suppose the sequence of quotients of the Eulcidean algorithm with $n$ and $a$ ends $\left( \ldots q_s, q_{s-1} \ldots, q_2, q_1 \right)$.  If $\mathfrak{c}_{i,j}$ are continuants for the sequence $(q_1, \ldots, q_s)$, then the sequence of remainders ends $ \left( \ldots \mathfrak{c}_{1,s}, \mathfrak{c}_{1,s-1}, \ldots, \mathfrak{c}_{1,0}, \mathfrak{c}_{1,-1} \right)$.
\end{lemma}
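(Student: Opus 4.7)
The plan is a reverse induction on the position in the remainder sequence, counted from the end. Let $R_k$ denote the $k$-th remainder from the bottom of the algorithm, so $R_1 = 0$ (the terminating remainder) and $R_2 = 1 = \gcd(n,a)$. I want to prove $R_k = \mathfrak{c}_{1,k-2}$ for each $k$, which matches the stated conclusion since as $k$ increases the right side runs through $\mathfrak{c}_{1,-1}, \mathfrak{c}_{1,0}, \mathfrak{c}_{1,1}, \ldots, \mathfrak{c}_{1,s}$.

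The two base cases hold immediately from the continuant conventions: $R_1 = 0 = \mathfrak{c}_{1,-1}$ and $R_2 = 1 = \mathfrak{c}_{1,0}$.

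For the inductive step, I first match quotients to equations. The very last equation of the Euclidean algorithm reads $R_3 = q_1 R_2 + R_1$ and uses the last quotient $q_1$. Working backward, the equation producing $R_k$ as a remainder takes the form $R_{k+2} = q_k R_{k+1} + R_k$, since the quotients and equations are indexed from the end in parallel. Assuming inductively that $R_{k+1} = \mathfrak{c}_{1,k-1}$ and $R_{k+2} = \mathfrak{c}_{1,k}$, the next equation up gives
\[
R_{k+3} = q_{k+1} R_{k+2} + R_{k+1} = q_{k+1}\,\mathfrak{c}_{1,k} + \mathfrak{c}_{1,k-1}.
\]
Setting $u_i = q_i$ and applying the continuant recursion \eqref{E:thirdEulereq} with $i=1$ and $s = k+1$ yields $\mathfrak{c}_{1,k+1} = u_{k+1}\,\mathfrak{c}_{1,k} + \mathfrak{c}_{1,k-1}$, so $R_{k+3} = \mathfrak{c}_{1,k+1}$, closing the induction.

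There is no substantial obstacle here; the proof is essentially the observation that the Euclidean algorithm's backward recursion is identical to the continuant recursion \eqref{E:thirdEulereq}. The only things demanding minor care are the two base-case conventions and the convention permitting $q_1 = 1$. The latter is harmless because it rewrites the final equation into exactly the standard inductive pattern, and the argument never appeals to the strict decrease of the remainders.
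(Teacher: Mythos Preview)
Your proof is correct and is exactly the approach the paper sketches: the paper's proof consists of the single sentence that the result follows by induction from the fact that the remainders obey the same recursion \eqref{E:thirdEulereq} as the continuants, and you have simply written out that induction in full, with the correct base cases coming from the conventions $\mathfrak{c}_{1,0}=1$ and $\mathfrak{c}_{1,-1}=0$.
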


\begin{proof}
This can be proved by induction, noting from Equation \eqref{E:thirdEulereq} that the remainders satisfy the same recursion as the continuants.
\end{proof}

The lemma shows that formulas for the remainders in the Euclidean algorithm are already given by the continuants of the full sequence of quotients.  The combination of this with the symmetry of the quotients formed the crux of Brillhart's proof \cite{jB1972}.  The following theorem gives more complicated expressions for the remainders involving continuants of only \emph{half} of the symmetric sequence of quotients.  The added complication is justified by Theorem 2 in which these expressions quickly reveal families collections of quadratic forms in the remainders that are all multiples of $n$.  The formulas are analogous to those in\cite{DLS2014} for the remainders  when performing the Euclidean algorithm with $n^2$ and $an+1$.  Perhaps both are special cases of some more general pattern. 

\begin{theorem}\label{T:explicitremainders}
Assume that the Euclidean algorithm with the relatively prime positive integers $n$ and $a$ produces a symmetric sequence of quotients $q_1, \ldots, q_s, q_s, \ldots, q_1$.
If $\mathfrak{c}_{i,j}$ are continuants of the sequence $\left( q_1, \ldots, q_s \right)$, then for $1 \leq i \leq s+1$,
\begin{align*}
	r_i &= \mathfrak{c}_{1,s} \mathfrak{c}_{i,s} + \mathfrak{c}_{1,s-1} \mathfrak{c}_{i,s-1} \quad \text{ and }\\
	r_{2s-i+3} = \mathfrak{c}_{1,i-2} &= (-1)^{i+s} \left( \mathfrak{c}_{1,s} \mathfrak{c}_{i,s-1} - \mathfrak{c}_{1,s-1} \mathfrak{c}_{i,s} \right)
\end{align*}
\end{theorem}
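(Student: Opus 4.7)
The plan is to dispose of the second displayed line almost immediately and to reserve the real work for the first. The equality $r_{2s-i+3} = \mathfrak{c}_{1,i-2}$ is a direct application of the Lemma to the full symmetric quotient sequence: because the last $s$ quotients read from right to left again form $(q_1, \ldots, q_s)$, the Lemma identifies the last $s+2$ remainders $r_{s+1}, r_{s+2}, \ldots, r_{2s+2}$ with $\mathfrak{c}_{1,s}, \mathfrak{c}_{1,s-1}, \ldots, \mathfrak{c}_{1,-1}$, and a trivial re-indexing yields the claim for $1 \leq i \leq s+1$. The companion equality $\mathfrak{c}_{1,i-2} = (-1)^{i+s}(\mathfrak{c}_{1,s}\mathfrak{c}_{i,s-1} - \mathfrak{c}_{1,s-1}\mathfrak{c}_{i,s})$ is literally the special case \eqref{E:zerothEulereq} of Euler's identity with $l = i$, so nothing further needs to be said about the second line.

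For the first line, I would set $R_i := \mathfrak{c}_{1,s}\mathfrak{c}_{i,s} + \mathfrak{c}_{1,s-1}\mathfrak{c}_{i,s-1}$ and prove $r_i = R_i$ by downward induction on $i$, from $i = s+1$ to $i = 1$. I would take two base cases, partly to avoid having to interpret the symbol $\mathfrak{c}_{s+2,s-1}$. For $i = s+1$, the conventional values $\mathfrak{c}_{s+1,s} = 1$ and $\mathfrak{c}_{s+1,s-1} = 0$ reduce $R_{s+1}$ to $\mathfrak{c}_{1,s}$, which is $r_{s+1}$ by the Lemma. For $i = s$, the values $\mathfrak{c}_{s,s} = q_s$ and $\mathfrak{c}_{s,s-1} = 1$ reduce $R_s$ to $q_s \mathfrak{c}_{1,s} + \mathfrak{c}_{1,s-1}$, which matches $r_s = q_s r_{s+1} + r_{s+2}$ after substituting $r_{s+1} = \mathfrak{c}_{1,s}$ and $r_{s+2} = \mathfrak{c}_{1,s-1}$ from the Lemma.

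For the inductive step ($i \leq s-1$), the algorithm equation $r_i = q_i r_{i+1} + r_{i+2}$ combined with the inductive hypothesis permits regrouping by $\mathfrak{c}_{1,s}$ and $\mathfrak{c}_{1,s-1}$; the two resulting brackets collapse to $\mathfrak{c}_{i,s}$ and $\mathfrak{c}_{i,s-1}$ respectively via the continuant recursion \eqref{E:secondEulereq}, giving $r_i = R_i$. The main (and essentially only) point that demands care is the bookkeeping of which quotient appears in the algorithm at step $i$: the equation $r_i = q_i r_{i+1} + r_{i+2}$ has coefficient $q_i$ only while $i \leq s$, because past the midpoint the symmetric sequence forces the mirrored quotient $q_{2s+1-i}$. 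Fortunately the induction terminates at $i = 1$, so the first half of the symmetric sequence supplies exactly the quotients needed and the mirrored half is never touched.
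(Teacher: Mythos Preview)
Your proposal is correct and follows essentially the same approach as the paper: the second line via the Lemma plus Equation~\eqref{E:zerothEulereq}, and the first line by downward induction with base cases $i=s+1,\,s$ and inductive step driven by~\eqref{E:secondEulereq}. Your added remark about why the quotient at step $i$ is genuinely $q_i$ (because the induction never crosses the midpoint) is a nice clarification the paper leaves implicit.
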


\begin{proof}
The lemma implies that for $1 \leq i \leq s+1$ the remainder $r_{2s-i+3}$ is $\mathfrak{c}_{1,i-2}$.  The second equality above then follows from Equation \eqref{E:zerothEulereq}.  The lemma also implies that 
\begin{equation*}
	r_{s+1} = \mathfrak{c}_{1,s} = \mathfrak{c}_{1,s} \mathfrak{c}_{s+1,s} + \mathfrak{c}_{1,s-1} \mathfrak{c}_{s+1,s-1}
\end{equation*}
The $s$th equation of the Euclidean algorithm with $n$ and $a$ is therefore
\begin{align*}
	r_{s} &= q_s \mathfrak{c}_{1,s} + \mathfrak{c}_{1,s-1}\\
	&= \mathfrak{c}_{1,s} \mathfrak{c}_{s,s} + \mathfrak{c}_{1,s-1} \mathfrak{c}_{s,s-1}
\end{align*}
Thus, the formula $r_i = \mathfrak{c}_{1,s} \mathfrak{c}_{i,s} + \mathfrak{c}_{1,s-1} \mathfrak{c}_{i,s-1}$ is valid for $i=s$ and $i=s+1$.  To prove the other cases of the formula, we use induction.  We must show
\begin{equation*}
	\mathfrak{c}_{1,s} \mathfrak{c}_{i,s} + \mathfrak{c}_{1,s-1} \mathfrak{c}_{i,s-1} = q_i \left( \mathfrak{c}_{1,s} \mathfrak{c}_{i+1,s} + \mathfrak{c}_{1,s-1} \mathfrak{c}_{i+1,s-1} \right) + \mathfrak{c}_{1,s} \mathfrak{c}_{i+2,s} + \mathfrak{c}_{1,s-1} \mathfrak{c}_{i+2,s-1}
\end{equation*}
for $1 \leq i \leq s-1$.  This follows from Equation \eqref{E:secondEulereq}.
\end{proof}

Theorem \ref{T:explicitremainders} yields in particular the formula 
\begin{equation*}
	n = r_1 = \mathfrak{c}_{1,s-1}^2 + \mathfrak{c}_{1,s}^2 = r_{s+1}^2 + r_{s+2}^2,
\end{equation*}
which is Brillhart's main result \cite{jB1972}.  It is known \cite{oP1954} that the Euclidean algorithm with relatively prime $n$ and $a$ will produce symmetric sequence of quotients if and only if $a^2 \equiv -1 \pmod{n}$ (applying the convention stated above the Lemma).  Thus, we also recover from Theorem \ref{T:explicitremainders} the Two Squares Theorem of Fermat: the prime numbers that are the sum of two integer squares are those that are congruent to 1 modulo 4.

Now suppose we are given a sequence of positive integers and wish to produce it as the sequence of quotients when the Euclidean algorithm is performed (adopting the convention stated above the Lemma).  This will occur for a unique input pair of relatively prime integers: the reduced numerator and denominator of the simple continued fraction with the given sequence as partial quotients.  We represent by $[q_1, \ldots, q_n]$ the simple continued fraction with sequence of partial quotients $q_1, \ldots, q_n$. We will also use the convention that 1 is the rational number corresponding to the simple continued fraction with empty list of partial quotients.


The next theorem generalizes the formula $n = r_{s+1}^2 + r_{s+2}^2$ to other quadratic forms in the remainders.







\begin{theorem}\label{T:sumofsquaredremainders}
Assume that the Euclidean algorithm with the relatively prime positive integers $n$ and $a$ produces a symmetric sequence of quotients $q_1, \ldots, q_s, q_s, \ldots, q_1$.  If $i$ and $j$ are integers with $0 \leq j \leq i \leq s$, then
\begin{align*}
	 r_{i-j+1} r_{i+j+1} + r_{2s+2-i-j} r_{2s+2-i+j} &=n \cdot \mathfrak{r}_{i,j}^+\\
	 r_{i-j+1} r_{i+j+2} - r_{2s+1-i-j} r_{2s+2-i+j} &= n \cdot \mathfrak{r}_{i,j}^-
\end{align*}
where $\mathfrak{r}_{i,j}^+$ and $\mathfrak{r}_{i,j}^-$ are the $(2j+1)$th and $(2j+2)$th remainders respectively  when the Euclidean algorithm is performed with the reduced numerator and denominator of the symmetric continued fraction $[q_{i-j+1},\ldots,q_{s}, q_s, \ldots, q_{i-j+1}]$.
 
%
\end{theorem}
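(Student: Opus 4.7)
The plan is to work with continuants of the \emph{full} symmetric sequence $(\tilde q_1, \ldots, \tilde q_{2s}) = (q_1, \ldots, q_s, q_s, \ldots, q_1)$ rather than with the half-sequence continuants of Theorem~\ref{T:explicitremainders}. Writing $\tilde{\mathfrak{c}}_{k,l}$ for continuants of this full sequence, applying Lemma~\ref{L:endofEA} directly to all $2s$ quotients shows that every remainder has the uniform expression $r_k = \tilde{\mathfrak{c}}_{1,\,2s+1-k}$. Because the sequence is palindromic, $\tilde q_l = \tilde q_{2s+1-l}$, and because continuants are invariant under reversal of their sequence, it follows that $\tilde{\mathfrak{c}}_{1,m} = \tilde{\mathfrak{c}}_{2s-m+1,\,2s}$ for every $m$; in particular $r_k = \tilde{\mathfrak{c}}_{k,\,2s}$ as well, so each remainder can be expressed as a full-sequence continuant anchored at \emph{either} end of the sequence, at will.

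Setting $A = i-j+1$ and $B = i+j+1$, the block $(\tilde q_A, \ldots, \tilde q_{2s-A+1})$ is exactly the symmetric list of partial quotients of the continued fraction named in the theorem, so a second application of Lemma~\ref{L:endofEA} to this sub-sequence of length $2(s-i+j)$ identifies
\[ \mathfrak{r}_{i,j}^+ = \tilde{\mathfrak{c}}_{A,\,2s-B+1}, \qquad \mathfrak{r}_{i,j}^- = \tilde{\mathfrak{c}}_{A,\,2s-B}. \]
Each claimed formula then collapses to a single instance of Euler's identity. For the first, I apply Euler to
\[ \tilde{\mathfrak{c}}_{1,\,2s}\,\tilde{\mathfrak{c}}_{A,\,2s-B+1} - \tilde{\mathfrak{c}}_{1,\,2s-B+1}\,\tilde{\mathfrak{c}}_{A,\,2s}; \]
since $A+B = 2i+2$ is even the resulting sign is $+1$, and two invocations of the palindrome identity $\tilde{\mathfrak{c}}_{1,m} = \tilde{\mathfrak{c}}_{2s-m+1,\,2s}$ collapse $\tilde{\mathfrak{c}}_{1,\,2s-B+1}$ to $\tilde{\mathfrak{c}}_{B,\,2s}$ and $\tilde{\mathfrak{c}}_{2s-B+3,\,2s}$ to $\tilde{\mathfrak{c}}_{1,\,B-2}$. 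Re-translating each continuant back into an $r$ yields the first identity. The same procedure with $m = 2s-B$ in place of $2s-B+1$ flips the Euler sign to $-1$ and delivers the second identity.

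The only real obstacle is bookkeeping: one must verify the Euler-identity index hypotheses hold throughout the range $0 \le j \le i \le s$ and apply the palindrome collapse at precisely the right steps. Degenerate situations, such as $i=s$, in which the subsequence is empty and its continued fraction equals $1$, are absorbed without trouble by the existing conventions $\mathfrak{c}_{j+1,j}=1$ and $\mathfrak{c}_{j+2,j}=0$.
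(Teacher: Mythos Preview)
Your argument is correct and is genuinely different from the paper's proof. The paper substitutes the half-sequence continuant formulas of Theorem~\ref{T:explicitremainders} for each remainder, multiplies out, and recognises the result as $(\mathfrak{c}_{1,s-1}^2+\mathfrak{c}_{1,s}^2)$ times a second factor; identifying that second factor as $\mathfrak{r}_{i,j}^\pm$ requires a further appeal to Theorem~\ref{T:explicitremainders} for the shorter palindrome, and the whole computation splits into the cases $i+j\le s$ and $i+j>s$ (and again $i+j<s$ versus $i+j\ge s$ for the minus identity). Your route instead works entirely with continuants $\tilde{\mathfrak{c}}_{k,l}$ of the full sequence $(q_1,\ldots,q_s,q_s,\ldots,q_1)$, uses the palindrome relation $\tilde{\mathfrak{c}}_{1,m}=\tilde{\mathfrak{c}}_{2s-m+1,\,2s}$ to flip anchors, and reduces each identity to a \emph{single} instance of Euler's identity with $(\iota,\lambda,\mu,\sigma)=(1,\,i-j+1,\,2s-i-j\ \text{or}\ 2s-i-j-1,\,2s)$. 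The index constraints $1\le\lambda\le\mu+2$ and $\mu\le\sigma$ hold uniformly for all $0\le j\le i\le s$, so no case split is needed. What the paper's approach buys is that the factorisation $n=\mathfrak{c}_{1,s-1}^2+\mathfrak{c}_{1,s}^2$ and the analogous formula for $\mathfrak{r}_{i,j}^+$ are made visible in the proof itself; what your approach buys is brevity, a uniform treatment of all parameter ranges, and independence from Theorem~\ref{T:explicitremainders}.
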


\begin{remark}
Wagon \cite{sW1990} observed that for $0 \leq i \leq s$, the number $r_{i+1}^2 + r_{2s+2-i}^2$ is a multiple of $n$.  We recover this by letting $j=0$ in the first equation of the theorem.
\end{remark}

\begin{proof}

Let the numbers $\mathfrak{c}_{i,j}$ be the continuants of the sequence $(q_1, \ldots, q_s)$.  If $i + j \leq s$, then by Theorem \ref{T:explicitremainders} we have
\begin{align*}
	&\phantom{=} r_{i-j+1} r_{i+j+1} + r_{2s+2-i-j} r_{2s+2-i+j} \\
	&= \left( \mathfrak{c}_{1,s} \mathfrak{c}_{i-j+1,s} + \mathfrak{c}_{1,s-1} \mathfrak{c}_{i-j+1,s-1} \right) \cdot \left( \mathfrak{c}_{1,s} \mathfrak{c}_{i+j+1,s} + \mathfrak{c}_{1,s-1} \mathfrak{c}_{i+j+1,s-1} \right) \\
	&+ (-1)^{i+j+s+1} \left( \mathfrak{c}_{1,s} \mathfrak{c}_{i+j+1,s-1} - \mathfrak{c}_{1,s-1} \mathfrak{c}_{i+j+1,s} \right) \cdot (-1)^{i-j+s+1} \left( \mathfrak{c}_{1,s}  \mathfrak{c}_{i-j+1,s-1} - \mathfrak{c}_{1,s-1} \mathfrak{c}_{i-j+1,s} \right) \notag \\
	&= \left( \mathfrak{c}_{1,s-1}^2 + \mathfrak{c}_{1,s}^2 \right) \cdot \left( \mathfrak{c}_{i-j+1,s-1} \mathfrak{c}_{i+j+1,s-1} + \mathfrak{c}_{i-j+1,s} \mathfrak{c}_{i+j+1,s} \right)
\end{align*}
Theorem \ref{T:explicitremainders} shows that the first factor is $n$.  Letting $[q_{i-j+1}, \ldots, q_s, q_s, \ldots, q_{i-j+1}]$ be the symmetric sequence in the theorem, we find that the second factor is $\mathfrak{r}_{i,j}^+$

One may perform a similar computation when $i + j > s$. The second equation above may be verified by similar computations in the separate cases $i+j < s$ and $i+j \geq s$.
\end{proof}

\begin{example}
Setting $n=829$ and $a=246$, the Euclidean algorithm produces the symmetric sequence of quotients $3$, $2$, $1$, $2$, $2$, $1$, $2$, $3$ and sequence of remainders
\begin{equation*}
	829 \qquad 246 \qquad 91 \qquad 64 \qquad 27 \qquad10 \qquad 7 \qquad 3 \qquad 1 \qquad 0
\end{equation*}
The symmetric subsequence of quotients $2$, $1$, $2$, $2$, $1$, $2$ is produced by performing the Euclidean algorithm with $73$ and $27$, which yields the sequence of remainders
\begin{equation*}
	73 \qquad 27 \qquad 19 \qquad 8 \qquad 3 \qquad 2 \qquad 1 \qquad 0
\end{equation*}
The symmetric sequence of quotients $1$, $2$, $2$, $1$ is produced by performing the Euclidean algorithm with $10$ and $7$, which yields the sequence of remainders
\begin{equation*}
	10 \qquad 7 \qquad 3 \qquad 1 \qquad 1 \qquad 0
\end{equation*}
(using the convention stated above the Lemma.)  Finally, the symmetric sequence of quotients $2$, $2$ is produced by performing the Euclidean algorithm with $5$ and $2$, yielding the sequence of remainders
\begin{equation*}
	5 \qquad 2 \qquad 1 \qquad 0
\end{equation*}

The following factorizations are all consequences of Theorem \ref{T:sumofsquaredremainders}:
\begin{align*}
	246^2 + 1^2 &= 73 \cdot 829\\
	246\cdot91 - 3\cdot1 &= 27 \cdot 829\\
	246\cdot64 + 7\cdot1 &= 19 \cdot 829\\
	246\cdot27 - 10\cdot1 &= 8 \cdot 829\\
	246\cdot10 + 27\cdot1 &= 3 \cdot 829\\
	246\cdot7-64\cdot1 &= 2 \cdot 829\\
	246\cdot3 + 91\cdot1 &= 1 \cdot 829\\
	91^2 + 3^2  &= 10 \cdot 829\\
	91\cdot64-7\cdot3 &= 7 \cdot 829\\	
	91\cdot27+10\cdot3  &= 3\cdot829\\
	91\cdot10-27\cdot3 &= 1 \cdot 829\\
	91\cdot7 + 64\cdot3 &= 1 \cdot829\\ 	
	64^2 + 7^2 &= 5 \cdot 829\\
	64\cdot27 - 10\cdot7 &= 2 \cdot 829\\
	64\cdot10 + 27\cdot7 &= 1 \cdot 829\\
	27^2 + 10^2  &= 1 \cdot 829\\
\end{align*}
\end{example}

The classical observation that the Euclidean algorithm with relatively prime $n$ and $a$ will produce symmetric sequence of quotients only if $a^2+1$ is a multiple of $n$ is recovered by setting $i=1$ and $j=0$ in Theorem \ref{T:explicitremainders}, as is a little more:

\begin{corollary} 
Assume that the Euclidean algorithm with the relatively prime positive integers $n$ and $a$ produces a symmetric sequence of quotients $q_1, \ldots, q_s, q_s, \ldots, q_1$. Then the  reduced numerator of the continued fraction $[q_2, \ldots, q_s, q_s, \ldots, q_2]$ is the integer $\mathfrak{r}_{2,0}^+ = \tfrac{a^2+1}{n}$, and the denominator is the remainder when $a$ is divided by $\mathfrak{r}_{2,0}^+$.
\end{corollary}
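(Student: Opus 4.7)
The plan is to combine Theorem \ref{T:sumofsquaredremainders} (for the numerator assertion) with a short continuant manipulation drawn from Theorem \ref{T:explicitremainders} (for the denominator assertion). Applying the first equation of Theorem \ref{T:sumofsquaredremainders} in the case $i=1$, $j=0$ gives
\[
    a^2 + 1 \;=\; r_2^2 + r_{2s+1}^2 \;=\; n \cdot \mathfrak{r}_{2,0}^{+},
\]
and by the definition of $\mathfrak{r}_{2,0}^{+}$ as the first remainder of the Euclidean algorithm on the reduced numerator and denominator of $[q_2,\ldots,q_s,q_s,\ldots,q_2]$, it is that reduced numerator. This disposes of the first claim.

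For the second claim, set $N := \mathfrak{r}_{2,0}^{+} = (a^2+1)/n$ and let $D$ denote the reduced denominator of $[q_2,\ldots,q_s,q_s,\ldots,q_2]$. Since $0 < D < N$, it suffices to exhibit an integer identity $a = q_1 N + D$. Let $\mathfrak{c}_{i,j}$ denote the continuants of $(q_1,\ldots,q_s)$ attached to the outer algorithm; the continuants of the shorter sequence $(q_2,\ldots,q_s)$ governing the inner algorithm are then simply $\mathfrak{c}_{i+1,j+1}$. Applying Theorem \ref{T:explicitremainders} with $i=2$ to the outer algorithm gives
\[
    a \;=\; \mathfrak{c}_{1,s}\mathfrak{c}_{2,s} + \mathfrak{c}_{1,s-1}\mathfrak{c}_{2,s-1},
\]
and applying it (with $i=2$) to the inner algorithm, after the reindexing above, gives
\[
    D \;=\; \mathfrak{c}_{2,s}\mathfrak{c}_{3,s} + \mathfrak{c}_{2,s-1}\mathfrak{c}_{3,s-1}.
\]

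Subtracting and factoring produces
\[
    a - D \;=\; \mathfrak{c}_{2,s}\bigl(\mathfrak{c}_{1,s}-\mathfrak{c}_{3,s}\bigr) + \mathfrak{c}_{2,s-1}\bigl(\mathfrak{c}_{1,s-1}-\mathfrak{c}_{3,s-1}\bigr),
\]
and a single application of Equation \eqref{E:secondEulereq} with $i=1$ — namely $\mathfrak{c}_{1,j} = q_1\mathfrak{c}_{2,j} + \mathfrak{c}_{3,j}$ for $j = s-1$ and $j = s$ — collapses this to $q_1\bigl(\mathfrak{c}_{2,s-1}^2 + \mathfrak{c}_{2,s}^2\bigr)$. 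By the first formula of Theorem \ref{T:explicitremainders} applied to the inner algorithm, the parenthesized sum equals $N$, so $a - D = q_1 N$ as required. The only real obstacle is clerical: one must keep the two nested layers of continuants straight and correctly identify $D$ as the value produced by Theorem \ref{T:explicitremainders} at the $i=2$ slot of the inner algorithm. Once the reindexing $\mathfrak{c}^{\text{inner}}_{i,j} = \mathfrak{c}^{\text{outer}}_{i+1,j+1}$ is pinned down, the rest is a single line of algebra driven by one instance of Euler's identity.
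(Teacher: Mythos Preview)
Your proof is correct. For the numerator both you and the paper simply invoke Theorem~\ref{T:sumofsquaredremainders}. For the denominator the two arguments diverge genuinely. The paper works with classical continued-fraction identities: stripping $q_1$ from $n/a=[q_1,\ldots,q_1]$ gives $[q_2,\ldots,q_1]=a/(n-q_1 a)$; reversing the partial quotients preserves the numerator, and the denominator of $[q_1,\ldots,q_2]$ is the numerator $N$ of $[q_2,\ldots,q_2]$; hence $[q_2,\ldots,q_2]=N/(a-q_1 N)$, with a separate floor calculation $\lfloor a/N\rfloor=\lfloor na/(a^2+1)\rfloor=\lfloor n/a\rfloor=q_1$ to confirm that $a-q_1 N$ really is the remainder. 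You instead never leave the continuant formalism: Theorem~\ref{T:explicitremainders} for the outer algorithm gives $a$, the same theorem for the inner algorithm (via the reindexing $\mathfrak{c}^{\text{inner}}_{i,j}=\mathfrak{c}_{i+1,j+1}$) gives $D$ and $N$, and one instance of \eqref{E:secondEulereq} collapses $a-D$ to $q_1 N$. Your route is more self-contained within the paper's own toolkit and avoids importing the reversal fact; the paper's route is shorter once that fact is granted. One small gap on your side: you assert $0<D<N$ without comment, and this is precisely what makes $a=q_1 N+D$ a division-with-remainder statement. It holds because $[q_2,\ldots,q_s,q_s,\ldots,q_2]>1$ whenever $s\geq 2$, but it deserves a word.
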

\begin{proof}
Setting $i=2$ and $j=0$ in Theorem \ref{T:sumofsquaredremainders} gives
\begin{equation*}
	a^2 + 1 = r_2^2 + r_{2s+1}^2 = n \cdot \mathfrak{r}_{2,0}^+,
\end{equation*}
which proves everything except the description of the denominator.  But
\begin{align*}
	\left[ q_2, \ldots, q_s, q_s, \ldots, q_1 \right] &= \frac{a}{n - q_1 a}
\end{align*}
Since the numerator of a continued fraction is unchanged upon reversing its sequence of quotients and since the denominator of $[q_1, \ldots, q_s, q_s, \ldots, q_2]$ is the numerator of $[q_2, \ldots, q_s, q_s, \ldots, q_2]$,
\begin{equation*}
	[q_1, \ldots, q_s, q_s, \ldots, q_2] = \frac{a}{\mathfrak{r}_{2,0}^+}.
\end{equation*}
Thus, 
\begin{equation*}
	[q_2, \ldots, q_s, q_s, \ldots, q_2] = \frac{\mathfrak{r}_{2,0}^+}{a - q_1 \mathfrak{r}_{2,0}^+},
\end{equation*}
and $a- q_1 \mathfrak{r}_{2,0}^+$ is the remainder when $a$ is divided by $\mathfrak{r}_{2,0}^+$ since 
\begin{equation*}
	\left\lfloor \tfrac{a}{\mathfrak{r}_{2,0}^+} \right\rfloor = \left\lfloor \tfrac{na}{a^2+1} \right\rfloor = \left\lfloor \tfrac{n}{a} \right\rfloor. 
\end{equation*}




\end{proof}

Given the numerator and denominator of the continued fraction $[q_1, \ldots, q_s, q_s, \ldots, q_1]$, we may apply this corollary iteratively to quickly compute the reduced fractions $[q_i, \ldots, q_s, q_s, \ldots, q_i]$ for $i=2, \ldots, s$.  In the example above:
\begin{equation*}
	[3,2,1,2,2,1,2,3] = \tfrac{829}{246}
\end{equation*}
We compute $\tfrac{246^2+1}{829} = 73$, whence 
\begin{equation*}
	[2,1,2,2,1,2] = \tfrac{73}{246 - 3 \cdot 73} = \tfrac{73}{27}.
\end{equation*}
Then $\tfrac{27^2+1}{73} = 10$, whence 
\begin{equation*}
	[1,2,2,1] = \tfrac{10}{27-2\cdot10} = \tfrac{10}{7}.
\end{equation*}
Finally, $\tfrac{7^2+1}{10} = 5$, whence 
\begin{equation*}
	[2,2] = \tfrac{5}{7-1\cdot5} = \tfrac{5}{2}.
\end{equation*}  

\end{document}